\documentclass{amsart}
\usepackage{amsfonts}
\usepackage{amsmath,amssymb}
\usepackage{amsthm}
\usepackage{amscd}
\usepackage{graphics}
\usepackage{graphicx}

\theoremstyle{remark}{

\newtheorem{Rem}{{\rm Remark}}

}
\theoremstyle{plain}
{

\newtheorem{Thm}{Theorem}
\newtheorem*{MainThm}{Main Theorem}

}

\begin{document}
\title[A study of Reeb spaces of non-proper smooth functions]{On Reeb spaces of non-proper smooth functions which are $1$-dimensional CW complexes}
\author{Naoki kitazawa}
\keywords{Smooth functions. Non-proper (smooth) functions. Reeb spaces. Hausdorff spaces. Connected and locally connected spaces. \\
\indent {\it \textup{2020} Mathematics Subject Classification}: Primary~58C05. Secondary~ 54D05, 57R45.}

\address{Osaka Central Advanced Mathematical Institute (OCAMI) \\
3-3-138 Sugimoto, Sumiyoshi-ku Osaka 558-8585
TEL: +81-6-6605-3103
}
\email{naokikitazawa.formath@gmail.com}
\urladdr{https://naokikitazawa.github.io/NaokiKitazawa.html}
\maketitle
\begin{abstract}
We discuss {\it Reeb spaces} of non-proper smooth functions. 
{\it {\rm (}Non-{\rm )}proper} maps are maps between topological spaces the preimages of compact sets by which are compact (resp. may not be compact). Their {\it Reeb spaces} are the spaces of all connected components of their level sets and with the natural quotient topologies.

In proper cases, explicit theory is developing since the 1950s, and related to Morse(-Bott) function theory. In such cases we have graphs naturally. Recently, general topological studies on Reeb spaces in proper cases with combinatorial ones are actively developing, mainly due to Gelbukh and Saeki. They are at most $1$-dimensional in certain tame cases. We extend a theorem by Saeki in the 2020s: the Reeb space of a smooth function on a closed manifold is naturally a graph if and only if its critical value set is finite.

\end{abstract}
\section{Introduction.}
\label{sec:1}
\subsection{Reeb spaces and Reeb graphs of smooth functions on manifolds with no boundary.}
\label{subsec:1.1}
{\it Reeb spaces} of smooth real-valued functions have already appeared in the 1950s (\cite{reeb}). They are spaces of all connected components of their level sets and with the natural quotient topologies. For a continuous real-valued function $c:X \rightarrow \mathbb{R}$ on a topological space $X$, we can define the equivalence relation ${\sim}_c$ on $X$ by the rule $x_1 {\sim}_c x_2$ if and only if they belong to a same connected component of the preimage $c^{-1}(y)$ (a {\it level set} of $c$). We have the quotient map $q_c:X \rightarrow R_c$ and the unique continuous function $\bar{c}:R_c \rightarrow \mathbb{R}$ with $c=\bar{c} \circ q_c$. This notion is closely related to Morse(-Bott) theory. Such spaces are fundamental tools in understanding various geometry of the manifolds $X$. They are fundamental and important in related theory and applications to differential topology and various geometry.

We do not need to understand related mathematics such as Morse(-Bott) functions and
 singularity theory of differentiable functions and maps well, here. However, we present related textbooks. For Morse functions, see \cite{milnor1, milnor2} and for Morse-Bott functions, see \cite{banyagahurtubise}. For singularity theory of differentiable maps, see \cite{golubitskyguillemin}.

For a differentiable function $c:X \rightarrow \mathbb{R}$, its {{\it critical} point $p \in X$ is of fundamental notions and it is a point where the differential of $c$ vanishes. 
The set of all critical point of $c$, denoted by $S(c)$, is the {\it critical set} of $c$ and $c(S(c))$ is the {\it critical value set} of $c$.

For a smooth function $c:X \rightarrow \mathbb{R}$ on a closed manifold $X$, if $R_c$ is
 at most $1$-dimensional and homeomorphic to a graph with $q_c(S(c))$ being the vertex set of the graph ${\mathcal{R}}_c=R_c$, then it is the {\it Reeb graph} of $c$, They are indeed graphs in the case
 of Morse(-Bott) functions $c$ and it is a kind of classical theory presented in \cite{reeb} with \cite{izar}, and for the Morse-Bott case see \cite{martinezalfaromezasarmientooliveira}, for example.
 
Recently, topological theory and combinatorial theory of Reeb spaces are developing, mainly due to Gelbukh and Saeki. Surprisingly such theory is new. There, for example, fundamental arguments on topological spaces and advanced theory is also important. These studies have been mainly for {\it proper} functions $c:X \rightarrow \mathbb{R}$, defined
 as functions such that the preimage $c^{-1}(K)$ is compact for any compact set $K \subset \mathbb{R}$. We are focused on {\it non-proper} functions, defined as real-valued functions on topological spaces which are not proper. For non-proper cases, we encounter so-called wild situations frequently and difficulty lies here. Recently, Gelbukh is actively studying non-proper cases and sufficient conditions for Reeb spaces to be Hausdorff, $1$-dimensional, and so on, have been studied, in \cite{gelbukh1, gelbukh2, gelbukh3, gelbukh4}.

Here, we are focused on a theorem on the graph structures of Reeb spaces of proper smooth functions, Theorem \ref{thm:1} (\cite[Theorem 3.1]{saeki1}: see also \cite[Theorems 2.1 and 2.8]{saeki2}).

Here, graphs and more generally, CW complexes (the closures of each $1$-cells of which may not be compact or which may not be locally finite) are important. For elementary theory on CW complexes, see \cite{hatcher} for example. It is also important that these spaces are Hausdorff. If the Reeb space $R_c$ of a smooth real-valued function $c:X \rightarrow \mathbb{R}$ on $X$ with no boundary is (Hausdorff and) regarded to be a at most $1$-dimensional cell complex, then we also use ${\mathcal{R}}_c$ for the Reeb space with the structure of the cell complex.

\begin{Thm}[\cite{saeki1}]
\label{thm:1}
For a smooth function $c:X \rightarrow \mathbb{R}$ on a closed manifold, 
${\mathcal{R}}_c=R_c$ is a finite graph whose vertex set is $q_c(S(c))$ {\rm (}, or the Reeb graph ${\mathcal{R}}_c=R_c$ of $c$,{\rm )} if and only if the critical value set $c(s(c))$ of $c$ is finite.

More generally, for a proper smooth function $c:X \rightarrow \mathbb{R}$ on a manifold $X$ with no boundary, ${\mathcal{R}}_c=R_c$ is a at most $1$-dimensional CW complex the set of all of whose 0-cells is $q_c(S(c))$ and which is locally finite, if and only if $c(S(c))$ is discrete and closed in $\mathbb{R}$.
\end{Thm}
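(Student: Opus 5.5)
We prove the general (proper) statement; the closed case follows because a proper function on a compact manifold has compact image, so a discrete closed subset of it is finite, and a locally finite CW complex that is compact is finite.

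\emph{Sufficiency.} Assume $c(S(c))$ is discrete and closed in $\mathbb{R}$. Fix a component $J$ of $\mathbb{R}-c(S(c))$, an open interval. Over $J$ the map $c$ is a proper submersion, so Ehresmann's fibration theorem gives a diffeomorphism $c^{-1}(J)\cong F\times J$ commuting with the projection. The fibre $F$ is compact, so it has finitely many components $F_1,\dots,F_k$. Hence $q_c(c^{-1}(J))$ is a disjoint union of $k$ open arcs, each mapped homeomorphically onto $J$ by $\bar c$. These arcs will be the open $1$-cells.

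For a critical value $t$, we need that $c^{-1}(t)$ has finitely many components and that $q_c(c^{-1}(t))$ is a discrete set of points, which serve as the $0$-cells. Finiteness of components follows from compactness of $c^{-1}(t)$ together with local connectedness of analytic-type level sets. Rather than use the latter, we argue with a tube: choose $\varepsilon>0$ with $[t-\varepsilon,t+\varepsilon]\cap c(S(c))=\{t\}$. Each component $C$ of $c^{-1}(t)$ has a neighborhood of the form $c^{-1}([t-\varepsilon,t+\varepsilon])$ restricted to the component containing $C$. Since $c^{-1}([t-\varepsilon,t+\varepsilon])$ is a compact manifold with boundary, it has finitely many components, and each contains only finitely many components of $c^{-1}(t)$. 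The second claim needs care; we aim to show each such component contains exactly one component of $c^{-1}(t)$ after shrinking $\varepsilon$, by a gradient-like flow argument. Then each open arc over $(t,t+\varepsilon)$ has closure adding exactly one point of $q_c(c^{-1}(t))$, and this gives the attaching maps.

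We must still check three things:
\begin{itemize}
\item the quotient topology on $R_c$ agrees with the CW topology;
\item $R_c$ is Hausdorff;
\item $R_c$ is locally finite.
\end{itemize}
Properness makes $q_c$ a closed map with compact fibres, so $R_c$ is Hausdorff. Local finiteness holds because only finitely many arcs meet each $\bar c^{-1}([t-\varepsilon,t+\varepsilon])$. The vertex set equals $q_c(S(c))$ if every component of a critical level contains a critical point; components without critical points can be absorbed into edges. Exactly how the paper's convention handles this is delicate, and we follow Saeki and treat them via the same local product structure.

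\emph{Necessity.} Suppose $\mathcal{R}_c$ is a locally finite at most $1$-dimensional CW complex with $0$-cell set $q_c(S(c))$. We show $c(S(c))$ is discrete and closed. Since $S(c)$ is closed and $c$ is proper, $c(S(c))$ is closed. For discreteness, suppose $t_n\to t$ with $t_n\in c(S(c))$ distinct. Pick $p_n\in S(c)$ with $c(p_n)=t_n$. By properness a subsequence converges to some $p\in S(c)$. Then $q_c(p_n)\to q_c(p)$, and the $q_c(p_n)$ are distinct $0$-cells because their $\bar c$ values differ. This contradicts the fact that in a locally finite CW complex the $0$-cells form a discrete closed set.

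The main obstacle is in sufficiency: showing that near a critical level, distinct components of $c^{-1}(t)$ are separated in the Reeb space and that each edge attaches to a single vertex. Wild critical sets, such as Cantor-type sets inside one level, could a priori produce infinitely many components. We would first try Saeki's argument: the set of values where the number of components of $c^{-1}((s-\delta,s+\delta))$ jumps is controlled by the finitely many components of the compact tube, forcing finiteness.
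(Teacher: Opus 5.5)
The paper does not prove this statement. It quotes it from Saeki \cite{saeki1,saeki2} and uses it as a black box, so your argument has to stand on its own. Your necessity direction is fine: $c(S(c))$ is closed by properness, and an accumulating sequence of critical values gives infinitely many distinct $0$-cells $q_c(p_n)$ converging to the $0$-cell $q_c(p)$. The reduction of the closed case is also fine.

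The genuine gap is in sufficiency, exactly where you flag it. You assert without reason that each component $W$ of $T:=c^{-1}([t-\varepsilon,t+\varepsilon])$ contains only finitely many components of $c^{-1}(t)$. The stronger claim, that it contains exactly one, is deferred to an unspecified gradient-like flow argument. Everything downstream rests on this:
\begin{itemize}
\item finiteness of $\bar{c}^{-1}(t)$;
\item the fact that each end of an open arc converges to a \emph{single} point (your attaching maps);
\item local finiteness;
\item agreement of the quotient topology with the CW topology, which you list but never verify.
\end{itemize}
The tools you suggest also do not fit. No gradient-like flow passes through the critical level, and no shrinking of $\varepsilon$ is needed. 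Local connectedness of level sets cannot be invoked either, since every closed subset of a manifold is a level set of some smooth function.

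The missing step is purely point-set. Let $L:=W\cap c^{-1}(t)$.
\begin{itemize}
\item By Ehresmann over $[t-\varepsilon,t)$ and $(t,t+\varepsilon]$, $W-c^{-1}(t)$ is a finite disjoint union of collars $E_i\cong C_i\times[t-\varepsilon,t)$ or $C_i\times(t,t+\varepsilon]$, one for each component $C_i$ of $\partial W$.
\item Each $E_i$ is clopen in $W-c^{-1}(t)$, so $\overline{E_i}\subset E_i\cup L$.
\item The set $\omega_i:=\overline{E_i}\cap L$ is the intersection of the nested continua $\overline{E_i\cap c^{-1}([s,t))}$ (resp.\ $\overline{E_i\cap c^{-1}((t,s])}$), hence nonempty and connected.
\item Suppose $L=L_1\sqcup L_2$ with $L_1,L_2$ nonempty and closed. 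Then the sets $W_j:=L_j\cup\bigcup_{\omega_i\subset L_j}\overline{E_i}$ ($j=1,2$) are disjoint, closed, and cover $W$, contradicting connectedness of $W$.
\item $L\neq\emptyset$, because the compact set $W$ cannot lie inside a half-open collar.
\end{itemize}
So $L$ is one component of $c^{-1}(t)$, and $\bar{c}^{-1}(t)$ has exactly one point for each component of $T$.

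Next, $R_c$ is Hausdorff by Theorem \ref{thm:2} (\ref{thm:2.3}), since level sets are compact. Your closed-map argument also works once you check that limits of connected level components are connected. Hence $q_c(W)$ is compact Hausdorff and meets $\bar{c}^{-1}(t)$ only in $q_c(L)$, so every arc end in $q_c(W)$ converges to $q_c(L)$. The obvious map from a finite star (one half-open edge per $E_i$, all ending at one vertex) onto $q_c(W)$ is then a continuous bijection from a compact space to a Hausdorff one, hence a homeomorphism. Moreover $q_c(W\cap c^{-1}((t-\varepsilon,t+\varepsilon)))$ is open in $R_c$, because its preimage is open and saturated. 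This gives the CW topology and local finiteness at once.

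Finally, the vertex-set issue is not delicate. If $L\cap S(c)=\emptyset$, then $W\cap S(c)=\emptyset$, so $W\cong L\times[t-\varepsilon,t+\varepsilon]$ by Ehresmann. Then $q_c(L)$ is an interior point of the edge obtained by merging the two adjacent arcs.
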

\subsection{Reeb spaces of non-proper continuous or smooth functions, our main result, and the story of the present paper.}
Reeb spaces of non-proper smooth functions are also explicitly studied in \cite{kitazawa}. This is related to so-called reconstruction of nice smooth functions on manifolds with no boundary, founded in \cite{sharko} and followed in \cite{masumotosaeki, michalak} for example. 

Our main result is a non-proper version of Theorem \ref{thm:1}. In the next section, we present it as Main Theorem with its proof. Our main strategy consists of application of important facts and arguments on topological spaces such as Theorem \ref{thm:2}. Theorem \ref{thm:1} is also respected in the proof, where we do not need to understand arguments in the original article \cite{saeki1,saeki2} well. We also explain related explicit cases and studies, as a kind of short appendices.

\section{Main Theorem.}
\subsection{Preliminaries.}

First, we introduce several fundamental notation for Main Theorem.

For the restriction of a map $c:X \rightarrow Y$ to a subset $Z\subset X$, we use $c {\mid}_Z$. For the closure of $Y \subset X$ in a topological space $X$, we use ${\overline{Y}}^X$.

The following are of fundamental propositions and theorems and they are presented in some Gelbukh's papers. They are also used in the present paper. See also \cite{willard} for other fundamental propositions and theorems on topological spaces including classical ones.
\begin{Thm}
\label{thm:2}
\begin{enumerate}
\item \label{thm:2.1} {\rm (}\cite[Theorem 3.2]{gelbukh1}, \cite[Theorem 27.12]{willard}, and so on.{\rm )} Every quotient space of a locally connected space is also locally connected.

\item \label{thm:2.2} {\rm (}\cite[Theorem 6.2]{gelbukh4}{\rm )} For a continuous function $c:X \rightarrow \mathbb{R}$ whose level sets are always connected, $R_c$ is Hausdorff. Furthermore, here, if $\bar{c}:R_c \rightarrow \mathbb{R}$ is an open map, then $R_c$ is homeomorphic to $c(X) \subset \mathbb{R}$.
\item \label{thm:2.3} {\rm (}\cite[Theorem 9.1]{gelbukh4}{\rm )} For a continuous function $c:X \rightarrow \mathbb{R}$ on a topological manifold $X$, $R_c$ is Hausdorff if and only if at least one of the following three holds.
\begin{itemize}
\item $X$ is compact.
\item Level sets of $c$ are always compact.
\item Level sets of $c$ are always connected. 
\end{itemize} 
\item \label{thm:2.4} {\rm (}\cite[Theorem 7.5]{gelbukh1}{\rm )} For a smooth function $c:X \rightarrow \mathbb{R}$ on a closed and connected manifold which is not constant, $R_c$ is a $1$-dimensional, compact, connected, and locally connected space. In short, it is a {\rm Peano continuum}. Furthermore, $R_c$ is also metrizable in this case.

\end{enumerate}
\end{Thm}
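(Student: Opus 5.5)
Theorem \ref{thm:2} collects known facts, and the plan is to check each item directly from the definitions of $R_c$, $q_c$ and $\bar{c}$, citing the stated references only for the hardest direction of (\ref{thm:2.3}). Item (\ref{thm:2.3}) is then used to get the Hausdorff property in (\ref{thm:2.4}).

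For (\ref{thm:2.1}), let $q:X\rightarrow Y$ be a quotient map with $X$ locally connected. Take an open $U\subset Y$ and a connected component $C$ of $U$; it suffices to show that $C$ is open, i.e.\ that $q^{-1}(C)$ is open. Let $x\in q^{-1}(C)$ and let $W$ be the connected component of $x$ in the open set $q^{-1}(U)$. Since $X$ is locally connected, $W$ is open. The set $q(W)$ is connected, lies in $U$ and meets $C$, so $q(W)\subset C$. Hence $q^{-1}(C)$ is a union of open sets, as required.

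For (\ref{thm:2.2}), connectedness of all level sets means that $\bar{c}:R_c\rightarrow c(X)$ is a continuous bijection.
\begin{itemize}
\item Two distinct points of $R_c$ have distinct values $t_1<t_2$ under $\bar{c}$. The preimages under $\bar{c}$ of disjoint open intervals around $t_1$ and $t_2$ are disjoint open sets separating them, so $R_c$ is Hausdorff.
\item If $\bar{c}$ is moreover open, then it is an open continuous bijection onto $c(X)$, hence a homeomorphism.
\end{itemize}

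For (\ref{thm:2.3}), I would follow \cite{gelbukh4} and only sketch the ideas.
\begin{itemize}
\item If $X$ is compact or all level sets are compact, take two distinct components $K_1,K_2$ of a level set. They are disjoint compact sets, and by compactness they can be separated by open sets of $X$ that are unions of pieces of nearby level sets. A tube-lemma type argument, using local compactness of the manifold $X$, should then produce disjoint saturated open neighborhoods of $K_1$ and $K_2$.
\item Two points in different level sets are separated through $\bar{c}$ as in (\ref{thm:2.2}). The connected case is exactly (\ref{thm:2.2}).
\item For the converse, one must show that a noncompact $X$ with some noncompact, disconnected level set yields two non-separable points. The idea is to use that a noncompact component of the level set can be approached by other components, and to exploit the manifold structure near "infinity".
\end{itemize}
This converse is the main obstacle, and for it I would rely on the cited argument.

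For (\ref{thm:2.4}), let $X$ be closed and connected and $c$ non-constant.
\begin{itemize}
\item $R_c=q_c(X)$ is compact and connected.
\item $R_c$ is locally connected by (\ref{thm:2.1}), since a manifold is locally connected.
\item $R_c$ is Hausdorff by (\ref{thm:2.3}), since $X$ is compact.
\item $R_c$ is a Hausdorff continuous image of a compact metric space, hence metrizable.
\item For dimension, $\bar{c}$ is a closed map (compact to Hausdorff) onto an interval. Each fiber $\bar{c}^{-1}(t)$ is the space of components of the compact level set $c^{-1}(t)$. This space is compact Hausdorff and totally disconnected, hence zero-dimensional. The Hurewicz theorem on closed maps then gives $\dim R_c\le 1+0=1$.
\item $\dim R_c\ge 1$ because $R_c$ maps onto a nondegenerate interval.
\end{itemize}
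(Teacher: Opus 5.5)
\textbf{The gap: the converse in item (3) is false as stated.} Your arguments for items (1) and (2) are correct. So is your route for (4), modulo the two repairs below. The paper itself gives no proof at all; it only cites \cite{gelbukh1}, \cite{gelbukh4} and \cite{willard}. Your proof of (4) also never uses smoothness.

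The genuine gap is the ``only if'' direction of item (3), which you defer to the citation. As stated here it is false, so no argument can supply it. Take $X=\mathbb{R}^2$ and $c(x,y)=x^2$.
\begin{enumerate}
\item $X$ is not compact.
\item $c^{-1}(1)=\{\pm1\}\times\mathbb{R}$ is non-compact and disconnected. So even your stronger reformulation, a single non-compact disconnected level set, is satisfied. (The actual negation of the three conditions only asks for \emph{some} non-compact level set and \emph{some}, possibly different, disconnected one.)
\item The components of level sets are exactly the vertical lines. Hence $q_c$ is the projection $(x,y)\mapsto x$, an open surjection, and $R_c\cong\mathbb{R}$ is Hausdorff.
\end{enumerate}
Your heuristic that a non-compact component ``can be approached by other components'' yields no non-separable pair. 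The only components accumulating on $\{1\}\times\mathbb{R}$ are the lines $\{1\pm\varepsilon\}\times\mathbb{R}$, and these stay away from the other component $\{-1\}\times\mathbb{R}$ of the same level set. Whatever \cite[Theorem 9.1]{gelbukh4} really asserts, it must carry hypotheses not reproduced here. Only the ``if'' direction of (3) is provable, and that is all the paper uses (STEP 1) and all your proof of (4) uses.

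\textbf{Two repairs needed for (4).} First, (4) rests on the ``if'' direction of (3), and your sketch of it (``unions of pieces of nearby level sets'', ``tube-lemma type argument'') omits the key step. Here it is.
\begin{enumerate}
\item Let $F=c^{-1}(t)$ be compact, $K$ a component of $F$, and $U\supset K$ open. Components and quasi-components coincide in the compact Hausdorff space $F$. So, by compactness, there is a clopen $F'\subset F$ with $K\subset F'\subset U$.
\item By local compactness, choose an open $U'$ with compact closure and $F'\subset U'\subset\overline{U'}\subset U\setminus(F\setminus F')$. Then $\partial U'$ is compact and disjoint from $F$.
\item Hence some open interval $J\ni t$ misses $c(\partial U')$. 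A component of $c^{-1}(s)$ with $s\in J$ that meets $U'$ is connected and misses $\partial U'$, so it lies in $U'$.
\item Thus $W=U'\cap c^{-1}(J)$ is a saturated open set with $K\subset W\subset U$.
\end{enumerate}
Applying this to disjoint open neighbourhoods of two components of $F$ gives disjoint open sets $q_c(W_1),q_c(W_2)$ in $R_c$. Points in different level sets are separated via $\bar{c}$, as you did.

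Second, ``$\dim R_c\ge 1$ because $R_c$ maps onto a non-degenerate interval'' is not a valid reason: the Cantor set maps continuously onto $[0,1]$. Argue instead that $R_c$ is connected with more than one point. A zero-dimensional compact Hausdorff space has a clopen base and is therefore totally disconnected, so $\dim R_c\ge1$.
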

Remark \ref{rem:1} is on generalizations of the notion of Reeb space and explicit facts on the topology such as Theorem \ref{thm:2} for general continuous maps $c:X \rightarrow Y$.
\begin{Rem}
	\label{rem:1}
We can define the {\it Reeb space} of a (continuous) map $c:X \rightarrow Y$ between topological spaces for a general space $Y$, in the same way. For the general situation, similar facts hold, in considerable cases. 
\end{Rem}

\subsection{Main Theorem and its proof.}
Main Theorem is as follows.
\begin{MainThm}

For a non-proper and non-constant smooth function $c:X \rightarrow \mathbb{R}$ on a manifold $X$ with no boundary, consider the following family of non-empty open subsets $\{U_{{\lambda}_{\rm P}}\}_{{\lambda}_{\rm P} \in {\Lambda}_{\rm P}}$ and non-empty open and connected subsets $\{U_{{\lambda}_{\rm NP}}\}_{{\lambda}_{\rm NP} \in {\Lambda}_{\rm NP}}$ of $R_c$.
\begin{itemize}
	\item These sets $U_{{\lambda}_{\rm P}}$ and $U_{{\lambda}_{\rm NP}}$ are mutually disjoint.
\item The restriction $c {\mid}_{{q_c}^{-1}(U_{{\lambda}_{\rm P}})}$ is proper.
\item The restriction $c {\mid}_{{q_c}^{-1}(U_{{\lambda}_{\rm NP}})}$ has no critical point, and its level sets are always non-compact and connected.
\end{itemize}
We define a closed subset $V_{R_c,{\Lambda}_{\rm P}, {\Lambda}_{\rm NP}}:=R_c-({\sqcup}_{{\lambda}_{\rm P} \in {\Lambda}_{\rm P}} (U_{{\lambda}_{\rm P}}) \sqcup {\sqcup}_{{\lambda}_{\rm NP} \in {\Lambda}_{\rm NP}} (U_{{\lambda}_{\rm NP}}))\subset R_c$.

Then we have the following two.
\begin{enumerate}
\item \label{thm:3.1} $R_c$ is Hausdorff if and only if for each point $v \in \bar{c}(V_{R_c,{\Lambda}_{\rm P}, {\Lambda}_{\rm NP}})$, distinct two points in the level set $c^{-1}(v)$ of $c$ can be always separated by suitable open neighborhoods in $R_c$.
\item \label{thm:3.2} Suppose that $R_c$ is Hausdorff, and that ${\overline{U_{{\lambda}_{\rm P}}}}^{R_c}$ is always compact. 
We have a $1$-dimensional CW complex ${\mathcal{R}}_{c,V_{R_c,{\Lambda}_{\rm P}, {\Lambda}_{\rm NP}}}$ whose underlying set is $R_c$, the set of all 0-cells of which is the closed and discrete subset ${\bar{c}}^{-1}(\bar{c}(V_{R_c,{\lambda}_{\rm P}, {\lambda}_{\rm NP}})) \bigcup q_c(S(c))$, and which may not be locally finite, if and only if both the following two hold.  

\begin{enumerate}
\item \label{thm:3.2.1} ${\bar{c}}^{-1}(\bar{c}(V_{R_c,{\Lambda}_{\rm P},{\Lambda}_{\rm NP}}))$ is closed and discrete in $R_c$. 
\item \label{thm:3.2.2} $c(S(c {\mid}_{{q_c}^{-1}(U_{{\lambda}_{\rm P}})}))$ are always finite sets.
\end{enumerate}
\end{enumerate}
\end{MainThm}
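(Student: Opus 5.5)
The plan is to localize everything to the open pieces $U_{{\lambda}_{\rm P}}$ and $U_{{\lambda}_{\rm NP}}$ and to the closed remainder $V:=V_{R_c,{\Lambda}_{\rm P},{\Lambda}_{\rm NP}}$. The main tool is that for an open set $U\subset R_c$ the restriction of $q_c$ to the open saturated set ${q_c}^{-1}(U)$ is again a quotient map onto $U$. Moreover, $U$ is naturally the Reeb space of $c{\mid}_{{q_c}^{-1}(U)}$, because the level-set components of the restriction are exactly the components of level sets of $c$ lying in ${q_c}^{-1}(U)$. This saturation is automatic, since ${q_c}^{-1}(U)$ is a union of whole components.

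\textbf{Part (1).} The "only if" direction is trivial. For "if", take distinct points $x,y\in R_c$. I read the condition in (1) as asserting separation, in $R_c$, of distinct points of $q_c(c^{-1}(v))$, i.e. of $\bar{c}^{-1}(v)$, for every $v \in \bar{c}(V)$. If $\bar{c}(x)\neq\bar{c}(y)$, I separate them by preimages under $\bar{c}$ of disjoint intervals. So assume $\bar{c}(x)=\bar{c}(y)=v$.
\begin{itemize}
\item If $v\in\bar{c}(V)$, the hypothesis applies directly.
\item Otherwise $x$ and $y$ both lie in the union of the $U$'s. If they lie in different $U$'s, those disjoint open sets separate them.
\item If both lie in one $U_{{\lambda}_{\rm P}}$, I use that $U_{{\lambda}_{\rm P}}$ is the Reeb space of a proper function on the manifold ${q_c}^{-1}(U_{{\lambda}_{\rm P}})$. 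Its level sets are then compact, so it is Hausdorff by Theorem \ref{thm:2} (\ref{thm:2.3}). Separating neighbourhoods inside the open set $U_{{\lambda}_{\rm P}}$ are open in $R_c$.
\item Two distinct points cannot lie in the same $U_{{\lambda}_{\rm NP}}$ with the same $\bar{c}$-value, since level sets there are connected.
\end{itemize}

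\textbf{Part (2).} First I describe each piece. On $U_{{\lambda}_{\rm NP}}$ the level sets are connected, so Theorem \ref{thm:2} (\ref{thm:2.2}) shows it is Hausdorff. Since the restriction has no critical point it is a submersion, hence an open map, so $\bar{c}$ is open on $U_{{\lambda}_{\rm NP}}$. Theorem \ref{thm:2} (\ref{thm:2.2}) then makes $U_{{\lambda}_{\rm NP}}$ homeomorphic to an open interval, by connectedness. On $U_{{\lambda}_{\rm P}}$ the restriction is proper on a manifold without boundary, so Theorem \ref{thm:1} applies. Condition (b) is exactly what is needed there: combined with compactness of the closure, it gives a finite graph structure whose vertices are $q_c(S(c))\cap U_{{\lambda}_{\rm P}}$ together with finitely many ends.

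\textbf{Sufficiency in (2).} Assume (a) and (b). I take as $0$-cells the set $W:=\bar{c}^{-1}(\bar{c}(V))\cup q_c(S(c))$. This set is closed and discrete: the first part by (a), and the critical part because it lies in the $U_{{\lambda}_{\rm P}}$, where it is locally finite by Theorem \ref{thm:1}. Removing $W$ leaves open subsets of the pieces, and I must check that each component is an open arc. In a $U_{{\lambda}_{\rm NP}}$ this holds because the piece is an interval. In a $U_{{\lambda}_{\rm P}}$ it holds because removing vertices of the Reeb graph leaves open edges. I then verify that each arc's closure adds only points of $W$ at its ends, using compactness of $\overline{U_{{\lambda}_{\rm P}}}^{R_c}$ and Hausdorffness. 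Finally I check the weak topology: a set meeting each closed cell in a closed set is closed. Here I use that $R_c$ is a quotient of a manifold, and is locally connected by Theorem \ref{thm:2} (\ref{thm:2.1}), together with discreteness of $W$.

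\textbf{Necessity in (2).} Given such a CW structure, its $0$-skeleton is closed and discrete, so its subset $\bar{c}^{-1}(\bar{c}(V))$ is too, which gives (a). For (b), a compact subset of a CW complex meets only finitely many cells. The critical values in $U_{{\lambda}_{\rm P}}$ come from $0$-cells in the compact set $\overline{U_{{\lambda}_{\rm P}}}^{R_c}$, so there are finitely many.

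I expect the main obstacle to be the gluing in the sufficiency direction. There I must control how the arcs of $U_{{\lambda}_{\rm NP}}$ and the edges of $U_{{\lambda}_{\rm P}}$ attach at points of $V$, which may have non-locally-finite valence. I also need that the quotient topology coincides with the CW (weak) topology. I would attack this by showing that each point of $W$ has a neighbourhood basis of the form ${q_c}$ of saturated neighbourhoods of the corresponding level-set component. Each such neighbourhood meets each incident arc in an initial segment, and this local "star" description gives the weak topology.
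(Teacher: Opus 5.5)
\textbf{Gap in the sufficiency half of (2).} Your argument for (1), your description of the pieces $U_{{\lambda}_{\rm NP}}$ and $U_{{\lambda}_{\rm P}}$, and the necessity half of (2) are fine and follow the paper's route. The gap is the claim that the critical part of $W={\bar{c}}^{-1}(\bar{c}(V))\cup q_c(S(c))$ is closed and discrete because it is locally finite inside each $U_{{\lambda}_{\rm P}}$. That only gives discreteness inside each piece. Nothing in (a) or (b) stops vertices in infinitely many different $U_{{\lambda}_{\rm P}}$, at levels other than $\bar{c}(v)$, from accumulating at a point $v\in V$. In that situation your initial-segment neighbourhood basis at $v$ also does not exist.

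\textbf{A counterexample.} Read ``proper'' as proper onto the image, as you implicitly do. (Read literally, ${q_c}^{-1}(U_{{\lambda}_{\rm P}})$ would be closed and open in $X$, so no $U_{{\lambda}_{\rm P}}$ could have ends at $V$.) On the open unit disk put $c(p)=g(|p|)+\sum_{n\ge 2}\phi_n(p)$, where:
$g\equiv 0$ on $[0,1/2]$, $g'<0$ on $(1/2,1)$, and $g(r)\to -2$ as $r\to 1$;
$\phi_n$ is a radial bump supported in the disk $D_n$ of radius $2^{-n-2}$ about $(2^{-n},0)$, of height $e^{-2^{n+2}}$ (fast enough decay for smoothness), with a single maximum $m_n$.

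Then $c$ is smooth, non-proper and non-constant. All its level sets are compact, so $R_c$ is Hausdorff. $R_c$ consists of a stem over $(-2,0)$, the point $v=q_c(c^{-1}(0))$, and branches $b_n\ni q_c(m_n)$. Take as the $U_{{\lambda}_{\rm P}}$ the sets $b_n-\{v\}$ together with the parts of the stem between consecutive levels of $\{-2+1/k\}_{k\ge 1}\cup\{0\}$, and set ${\Lambda}_{\rm NP}=\emptyset$.

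Every closure is compact, and every piece has at most one critical value, so (b) holds. ${\bar{c}}^{-1}(\bar{c}(V))=V$ consists of $v$ and the stem points at the levels $-2+1/k$; this is closed and discrete, so (a) holds. But any open set containing $c^{-1}(0)$ contains a ball about the origin, hence $D_n$ for all large $n$. So $q_c(m_n)\to v$, and every neighbourhood of $v$ contains whole branches. Thus $W$ is not discrete and no such CW structure exists.

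\textbf{What this means for your proof.} This step cannot be completed from (a) and (b) alone, whatever gluing argument you choose. The paper's STEP 2-3 makes the same assertion (that $q_c(S(c))$ off $V$ is closed and discrete in $R_c$ ``by our arguments'') without handling infinitely many $U_{{\lambda}_{\rm P}}$, so following it does not supply the missing idea.

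What is needed is an extra hypothesis controlling how pieces accumulate at $V$. For example, require that each point of $V$ has a neighbourhood meeting only finitely many $U_{{\lambda}_{\rm P}}$. With that, (a), (b) and compactness of the closures do give finiteness of critical images near $v$: infinitely many at one level would accumulate at a point of $V$ on that level, contradicting (a). You would then still have to prove, not assume, the initial-segment description of neighbourhoods of $v$ along the possibly infinitely many incident arcs coming from the $U_{{\lambda}_{\rm NP}}$.
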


\begin{proof}
STEP 1 Main Theorem (\ref{thm:3.1}). \\

We prove Main Theorem (\ref{thm:3.1}).

First, suppose that for some point $v \in \bar{c}(V_{R_c,{\Lambda}_{\rm P}, {\Lambda}_{\rm NP}})$, some distinct two points in the level set $c^{-1}(v)$ of $c$ are
not separated by any suitable open neighborhoods in $R_c$ (remember the function $\bar{c}:R_c \rightarrow \mathbb{R}$ and the relation $c=\bar{c} \circ q_c$ again). In this case, $R_c$ is not Hausdorff.

Hereafter, suppose that for each point $v \in \bar{c}(V_{R_c,{\Lambda}_{\rm P}, {\Lambda}_{\rm NP}}) \subset \mathbb{R}$,
any distinct two points in the level set $c^{-1}(v)$ of $c$ are separated by suitable open neighborhoods in $R_c$. We have that (${\sqcup}_{{\lambda}_{\rm P} \in {\Lambda}_{\rm P}} (U_{{\lambda}_{\rm P}}) \sqcup {\sqcup}_{{\lambda}_{\rm NP} \in {\Lambda}_{\rm NP}} (U_{{\lambda}_{\rm NP}})) \subset R_c$ is Hausdorff, by our definitions and Theorem \ref{thm:2} (\ref{thm:2.3}). By the argument and this assumption on $v \in \bar{c}(V_{R_c,{\Lambda}_{\rm P}, {\Lambda}_{\rm NP}})$ and the level set $c^{-1}(v)$ of $c$, arbitrary two distinct points in $R_c$ are separated by some open sets in $R_c$ and $R_c$ is Hausdorff.

This completes STEP 1. \\
\ \\
STEP 2 Main Theorem (\ref{thm:3.2}).\\

We prove Main Theorem (\ref{thm:3.2}). $R_c$ is assumed to be Hausdorff. \\ 
\ \\
\noindent STEP 2-1 The condition (\ref{thm:3.2.1}) should be assumed.\\

First, suppose that the condition (\ref{thm:3.2.1}) does not hold. In this case, we cannot have any $1$-dimensional CW complex ${\mathcal{R}}_{c,V_{R_c,{\Lambda}_{\rm P}, {\Lambda}_{\rm NP}}}$, by
the definition of a CW complex.

We prove the theorem under the assumption that the condition (\ref{thm:3.2.1}) holds. \\

\noindent STEP 2-2 The condition (\ref{thm:3.2.2}) should be assumed.\\

We discuss the condition (\ref{thm:3.2.2}).

The restriction $c {\mid}_{{q_c}^{-1}(U_{{\lambda}_{\rm NP}})}$ has only non-compact and connected level sets and no critical point. 
We investigate the function $\bar{c {\mid}_{{q_c}^{-1}(U_{{\lambda}_{\rm NP}})}}:R_{c {\mid}_{{q_c}^{-1}(U_{{\lambda}_{\rm NP}})}} \subset R_C \rightarrow \mathbb{R}$. For a point $p$ in each level set of $c {\mid}_{{q_c}^{-1}(U_{{\lambda}_{\rm NP}})}$, we can find a small open set $U_p$ which is diffeomorphic to an open disk in the manifold ${q_c}^{-1}(U_{{\lambda}_{\rm NP}})$ and whose closure ${\overline{U_p}}^{{q_c}^{-1}(U_{{\lambda}_{\rm NP}})}$ is diffeomorphic to a closed disk embedded there and compact. We also have a vector field on ${\overline{U_p}}^{{q_c}^{-1}(U_{{\lambda}_{\rm NP}})}$ where the differential of $c {\mid}_{{q_c}^{-1}(U_{{\lambda}_{\rm NP}})}$ does not vanish.
This means that $\bar{c {\mid}_{{q_c}^{-1}(U_{{\lambda}_{\rm NP}})}}:R_{c {\mid}_{{q_c}^{-1}(U_{{\lambda}_{\rm NP}})}} \rightarrow \mathbb{R}$ is an open map. Furthermore, by Theorem \ref{thm:2} (\ref{thm:2.2}),
$U_{{\lambda}_{\rm NP}}$ is homeomorphic to the disjoint union of open intervals in $\mathbb{R}$, and an open interval by the assumption of its connectedness. In addition, its closure ${\overline{U_{{\lambda}_{\rm NP}}}}^{R_c}$ is obtained by adding no set or a suitable non-empty set to $U_{{\lambda}_{\rm NP}}$.

The restriction $c {\mid}_{{q_c}^{-1}(U_{{\lambda}_{\rm P}})}$ is proper. 
By so-called Ehresmann fibration theorem or the arguments as ones in investigating the function $c {\mid}_{{q_c}^{-1}(U_{{\lambda}_{\rm NP}})}$ and the function $\bar{c {\mid}_{{q_c}^{-1}(U_{{\lambda}_{\rm NP}})}}:R_{c {\mid}_{{q_c}^{-1}(U_{{\lambda}_{\rm NP}})}} \subset R_C \rightarrow \mathbb{R}$, the complementary set of $U_{{\lambda}_{\rm P}} \bigcap q_c(S(c))$ in $U_{{\lambda}_{\rm P}}$ is homeomorphic to the disjoint union of finitely many open intervals in $\mathbb{R}$.

Suppose that some $c(S(c {\mid}_{{q_c}^{-1}(U_{{\lambda}_{\rm P}})}))$ are infinite sets: the condition (\ref{thm:3.2.2}) does not hold. Remember that all ${\overline{U_{{\lambda}_{\rm P}}}}^{R_c}$ are assumed to be compact, and that they are Hausdorff. We can see that ${\bar{c}}^{-1}(\bar{c}(V_{R_c,{\lambda}_{\rm P}, {\lambda}_{\rm NP}})) \bigcup q_c(S(c))$ is not discrete in $R_c$. Thus, we cannot have any $1$-dimensional CW complex ${\mathcal{R}}_{c,V_{R_c,{\Lambda}_{\rm P}, {\Lambda}_{\rm NP}}}$ the set of all $0$-cells of which is ${\bar{c}}^{-1}(\bar{c}(V_{R_c,{\lambda}_{\rm P}, {\lambda}_{\rm NP}})) \bigcup q_c(S(c))$.

Based on this, hereafter, suppose that $c(S(c {\mid}_{{q_c}^{-1}(U_{{\lambda}_{\rm P}})}))$ are always finite. This means that the condition (\ref{thm:3.2.2}) holds. It also holds that ${\mathcal{R}}_{c {\mid}_{{q_c}^{-1}(U_{{\lambda}_{\rm P}})}}$ is regarded to be a $1$-dimensional CW complex which is finite and locally finite, by Theorem \ref{thm:1}. \\

\noindent STEP 2-3 On ${\bar{c}}^{-1}(\bar{c}(V_{R_c,{\lambda}_{\rm P}, {\lambda}_{\rm NP}})) \bigcup q_c(S(c))$ and a small (open and connected) neighborhood $N(v_{R_c})$ of $v_{R_c} \in V_{R_c,{\Lambda}_{\rm P}, {\Lambda}_{\rm NP}}$ in $R_c$.\\

First, remember the following. By STEP 2-2,  each of sets $U_{{\lambda}_{\rm P}}$ is a $1$-dimensional finite and locally finite CW complex and each of sets $U_{{\lambda}_{\rm NP}}$ is homeomorphic to open intervals.

Choose all $U_{{\lambda}_{\rm P}}$ and $U_{{\lambda}_{\rm NP}}$ whose closures $\overline{U_{{\lambda}_{\rm P}}}^{R_c}$ and $\overline{U_{{\lambda}_{\rm NP}}}^{R_c}$  contain $v_{R_c}$. 

There exists a connected open neighborhood $N(v_{R_c})$ of $v_{R_c}$ in $R_c$ containing no other point in ${\bar{c}}^{-1}(\bar{c}(v_{R_c}))$ by STEP 2-2, and any such set $N(v_{R_c})$ contains infinitely many points of $U_{{\lambda}_{\rm P}}$ ($U_{{\lambda}_{\rm NP}}$) for each of $U_{{\lambda}_{\rm P}}$ (resp. $U_{{\lambda}_{\rm NP}}$) in $R_c$.

Remember that $R_c$ is assumed to be Hausdorff. Suppose that the infinitely many points are in some compact subset $K_{\rm P}$ (resp. $K_{\rm NP}$) of each $U_{{\lambda}_{\rm P}}$ (resp. $U_{{\lambda}_{\rm NP}}$), being regarded to be a $1$-dimensional finite CW-complex and metrizable, and under this assumption, $v_{R_c}$ and the compact set $K_{\rm P}$ (resp. $K_{\rm NP}$) must be separated by their open neighborhoods in $R_c$, and this is a contradiction. 

Remember that each of sets $U_{{\lambda}_{\rm P}}$ is a $1$-dimensional finite and locally finite CW complex and that each of sets $U_{{\lambda}_{\rm NP}}$ is homeomorphic an open interval.

From the argument, there exist a connected closed neighborhood $C(U_{{\lambda}_{\rm P}} \bigcap q_f(S(f)))$ of $U_{{\lambda}_{\rm P}} \bigcap q_f(S(f))$ being also compact in $U_{{\lambda}_{\rm P}}$ and a non-empty connected subset $C(U_{{\lambda}_{\rm NP}})$ of $U_{{\lambda}_{\rm NP}}$ being also compact, and we can choose at least one open and connected set $I_{{\lambda}_{\rm P}} \subset U_{{\lambda}_{\rm P}}-C(U_{{\lambda}_{\rm P}} \bigcap q_f(S(f)))$ in each set $U_{{\lambda}_{\rm P}}$ and at least one open and connected set $I_{{\lambda}_{\rm NP}}$ in each set $U_{{\lambda}_{\rm NP}}$ with the following.
\begin{itemize}
\item The closure of $I_{{\lambda}_{\rm P}}$ in $R_c$ contains the point $v_{R_c}$. $I_{{\lambda}_{\rm P}}$ is homeomorphic to and identified with an open interval of the form $\{t \mid a_{I_{{\lambda}_{\rm P}},1}<t<a_{I_{{\lambda}_{\rm P}},2}\}$ and either the following hold.
\begin{itemize}
\item If we choose a decreasing sequence of numbers there, converging to $a_{I_{{\lambda}_{\rm P}},1}$, then it always converges to a fixed point $p_{a_{I_{{\lambda}_{\rm P}}}}$ of $U_{{\lambda}_{\rm P}}-{\bar{c}}^{-1}(\bar{c}(v_{R_c}))$, in $R_c$. If we choose an increasing sequence of numbers there, converging to $a_{I_{{\lambda}_{\rm P}},2}$, then it cannot converge to any point of $U_{{\lambda}_{\rm P}}$, in $R_c$, and furthermore, the values of $\bar{c}$ at the points of the sequence converges to $\bar{c}(v_{R_c})$.
\item If we choose an increasing sequence of numbers there, converging to $a_{I_{{\lambda}_{\rm P}},2}$, then it always converges to a fixed point in $p_{a_{I_{{\lambda}_{\rm P}}}} \in U_{{\lambda}_{\rm P}}-{\bar{c}}^{-1}(\bar{c}(v_{R_c}))$, in $R_c$. If we choose a decreasing sequence of numbers there, converging to $a_{I_{{\lambda}_{\rm P}},1}$, then it cannot converge to any point of $U_{{\lambda}_{\rm P}}$, in $R_c$, and furthermore, the corresponding sequence of the values of $\bar{c}$ at the points of the sequence converges to $\bar{c}(v_{R_c})$.
\end{itemize}

\item The closure of $I_{{\lambda}_{\rm NP}}$ in $R_c$ contains the point $v_{R_c}$. $I_{{\lambda}_{\rm NP}}$ is homeomorphic to and identified with an open interval of the form $\{t \mid a_{I_{{\lambda}_{\rm NP}},1}<t<a_{I_{{\lambda}_{\rm NP}},2}\}$ and as in the previous case, either of the corresponding two facts holds.

\end{itemize}

Consider the union $I_{v_{R_c}}$ of all such connected sets $I_{{\lambda}_{\rm P}}$  and $I_{{\lambda}_{\rm NP}}$ corresponding to $v_{R_c}$.
Suppose that one of sequences of points in $I_{v_{R_c}}$ converges to some point ${v_{R_c}}^{\prime} \in {\bar{c}}^{-1}(\bar{c}(v_{R_c}))-\{v_{R_c}\}$. 
We already have such a sequence converging to $v_{R_c}$, by our arguments. By Theorem \ref{thm:2} (\ref{thm:2.1}), $X$ is locally connected and $R_c$ is locally connected. $R_c$ is also assumed to be Hausdorff. We have a contradiction by an argument of \cite[Proof of Theorem 2.8]{saeki2}. More precisely, $v_{R_c}$ and ${v_{R_c}}^{\prime}$ are separated by some small open connected neighborhoods ${N(v_{R_c})}^{\prime}$ and ${N({v_{R_c}}^{\prime})}^{\prime}$ in $R_c$, both the intersections ${N(v_{R_c})}^{\prime} \bigcap I_{v_{R_c}}$ and ${N({v_{R_c}}^{\prime})}^{\prime} \bigcap I_{v_{R_c}}$ contain infinitely many connected components homeomorphic to an open interval, and this is a contradiction. This means that we cannot have a case ${v_{R_c}}^{\prime} \neq v_{R_c}$.

Furthermore, any sequence of elements in each of the sets $I_{{\lambda}_{\rm P}} \subset R_c$ and $I_{{\lambda}_{\rm NP}} \subset R_c$ whose corresponding sequence of the values of $\bar{c}$ converges to $\bar{c}(v_{R_c})$ must not diverge and must converge to $v_{R_c}$, essentially due to the fact that $R_c$ is Hausdorff and locally connected and the following. A neighborhood system of $v_{R_c}$ in each of $\{v_{R_c}\} \bigcup I_{{\lambda}_{\rm P}}$ and $\{v_{R_c}\} \bigcup I_{{\lambda}_{\rm NP}}$ can be chosen to be of one of the following forms, where $t_{\mathbb{Q}}$ stands for all positive rational numbers satisfying $t_{\mathbb{Q}}<a_{I_{{\lambda}_{\rm NP}},2}-a_{I_{{\lambda}_{\rm NP}},1}$.
\begin{itemize}
\item  $\{\{v_{R_c}\} \sqcup \{t \mid a_{I_{{\lambda}_{\rm P}},1}<t<a_{I_{{\lambda}_{\rm P}},1}+t_{\mathbb{Q}}\} \mid t_{\mathbb{Q}}\}$.
\item $\{\{v_{R_c}\} \sqcup \{t \mid a_{I_{{\lambda}_{\rm NP}},1}<t<a_{I_{{\lambda}_{\rm NP}},1}+t_{\mathbb{Q}}\} \mid t_{\mathbb{Q}}\}$.
\item  $\{\{v_{R_c}\} \sqcup \{t \mid a_{I_{{\lambda}_{\rm P}},2}-t_{\mathbb{Q}}<t<a_{I_{{\lambda}_{\rm P}},2}\} \mid t_{\mathbb{Q}}\}$.
\item  $\{\{v_{R_c}\} \sqcup \{t \mid a_{I_{{\lambda}_{\rm P}},2}-t_{\mathbb{Q}}<t<a_{I_{{\lambda}_{\rm P}},2}\} \mid t_{\mathbb{Q}}\}$.
\end{itemize}

We can choose a connected open neighborhood $N(v_{R_c}):=U(v_{R_c})$ of $v_{R_c}$ in $R_c$ which does not contain any other element of ${\bar{c}}^{-1}(\bar{c}(V_{R_c,{\Lambda}_{\rm P},{\Lambda}_{\rm NP}}))$, due to the assumption that ${\bar{c}}^{-1}(\bar{c}(V_{R_c,{\Lambda}_{\rm P},{\Lambda}_{\rm NP}}))$ is discrete. By the fact that $R_c$ is locally connected, we can choose such a connected open neighborhood of $v_{R_c}$ in $R_c$ disjoint from sets $U_{{\lambda}_{\rm P}}$ and $U_{{\lambda}_{\rm NP}}$ in $R_c$ whose closures ${\overline{U_{{\lambda}_{\rm P}}}}^{R_c}$ and ${\overline{U_{{\lambda}_{\rm NP}}}}^{R_c}$ do not contain $v_{R_c}$: if we cannot do this, then the set $U(v_{R_c})$ must always contain at least one connected and open set of $U(v_{R_c})$ being also regarded to be a $1$-dimensional locally finite CW complex in $U_{{\lambda}_{\rm P}}$ or $U_{{\lambda}_{\rm NP}}$ whose closures ${\overline{U_{{\lambda}_{\rm P}}}}^{R_c}$ and ${\overline{U_{{\lambda}_{\rm NP}}}}^{R_c}$ do not contain $v_{R_c}$, and we have a contradiction. For this, remember also several arguments on locally-connected spaces before. 

From the arguments, we can also choose $N(v_{R_c})=U(v_{R_c}):=I_{v_{R_C}}$.
Remember again that $q_c(S(c))-(q_c(S(c)) \bigcap V_{R_c,{\Lambda}_{\rm P}, {\Lambda}_{\rm NP}})$ is discrete and closed in $R_c-V_{R_c,{\Lambda}_{\rm P}, {\Lambda}_{\rm NP}}$ and by our arguments this is also closed and discrete in $R_c$. Remember also that $V_{R_c,{\Lambda}_{\rm P}, {\Lambda}_{\rm NP}}$ is a closed and discrete subset of $R_c$. Thus, $V_{R_c,{\lambda}_{\rm P}, {\lambda}_{\rm NP}} \bigcup q_c(S(c))$ is a closed and discrete subset of $R_c$.

Thus, we have a CW complex ${\mathcal{R}}_{c,V_{R_c,{\Lambda}_{\rm P}, {\Lambda}_{\rm NP}}}$ whose underlying set is $R_c$, the set of all $0$-cells of which is the discrete and closed subset ${\bar{c}}^{-1}(\bar{c}(V_{R_c,{\lambda}_{\rm P}, {\lambda}_{\rm NP}})) \bigcup q_c(S(c))$, and which may not be locally finite. This CW complex is $1$-dimensional since the function is non-constant. 

This completes STEP 2. \\
\ \\
This completes the proof.
\end{proof}
We close the paper by explaining related explicit situations.

The case $X$ is closed (or Theorem \ref{thm:1}) can be regarded to be a case $R_c-q_c(S(c))={\sqcup}_{{\lambda}_{\rm P} \in {\Lambda}_{\rm P}} U_{{\lambda}_{\rm P}}$. Note that this is not a non-proper case. However, the fact that the function is proper is not essential.

\cite{kitazawa} is for Main Theorem, where we may regard
$V_{R_c,{\Lambda}_{\rm P}, {\Lambda}_{\rm NP}}  \subset q_c(S(c))$ and $V_{R_c,{\Lambda}_{\rm P}, {\Lambda}_{\rm NP}}=q_c(S(c))$ in a suitable way, and may not regard $V_{R_c,{\Lambda}_{\rm P}, {\Lambda}_{\rm NP}}-(V_{R_c,{\Lambda}_{\rm P}, {\Lambda}_{\rm NP}}\bigcap q_c(S(c)))$ to be non-empty.
\section{Conflict of interest, data, and so on.}
 The author is a researcher at Osaka Central Advanced Mathematical Institute (OCAMI researcher). It is funded by MEXT Promotion of Distinctive Joint Research Center Program JPMXP0723833165. He is not employed by the institute or the projects there. He thanks all there for their hospitality. 

 
No data other than the present file is generated, related to the present paper.



\begin{thebibliography}{25}
\bibitem{banyagahurtubise} A. Banyaga and D. Hurtubise, \textsl{Lectures on Morse Homology}, Kluwer Texts in the Mathematical Sciences, vol. 29. Kluwer Academic Publishers Groups, Dordrecht (2004).


	%




\bibitem{gelbukh1} I. Gelbukh, \textsl{On the topology of the Reeb graph}, Publicationes Mathematicae Debrecen 104(3--4) (2023), 343--365.
\bibitem{gelbukh2} I. Gelbukh, \textsl{Topological dimension of a Reeb graph and a Reeb space}, Topology and its Applications, Volume 373, 1 November 2025, 109462.
\bibitem{gelbukh3} I. Gelbukh, \textsl{A corrected sufficient condition for the hausdorffness of a Reeb space}, Beitr\"age zur Algebra und Geometrie/ Contribution to Algebra and Geometry 66: 443-335, Volume 373, 1 November 2025, 109462.
\bibitem{gelbukh4} I. Gelbukh, \textsl{On Hausdorffness of a Reeb space and a Reeb graph}, https://www.researchgate.net/profile/Irina-Gelbukh-2/publication/394428146\_On\_Hausdorffness
\_of\_a\_Reeb\_space\_and\_a\_Reeb\_graph/links/\\kk68a28a856327cf7b63d72900/On-Hausdorffness-of-a-Reeb-space-and-a-Reeb-graph.pdf,\\ accepted for publication in Filomat, 2026.




\bibitem{golubitskyguillemin} M. Golubitsky and V. Guillemin, \textsl{Stable Mappings and Their Singularities}, Graduate Texts in Mathematics (14), Springer-Verlag(1974).
\bibitem{hatcher} A. Hatcher, \textsl{Algebraic Topology}, Electronic version, https://pi.math.cornell.edu/$\sim$hatcher/AT/AT+.pdf.
\bibitem{izar} Izar. S. A, \textsl{Funções de Morse e Topologia das Superfícies I: O grafo de Reeb de $f:M \rightarrow \mathbb{R}$}, Métrica no. 31, In Estudo e Pesquisas em Matemática, Brazil: IBILCE, 1988, https://www.ibilce.unesp.br/Home/Departamentos/Matematica/metrica-31.pdf.
\bibitem{kitazawa} N. Kitazawa, \textsl{On Reeb graphs induced from smooth functions on closed or open surfaces}, Methods of Functional Analysis and Topology Vol. 28 No. 2 (2022), 127--143, arXiv:1908.04340.







\bibitem{martinezalfaromezasarmientooliveira} J. Martinez-Alfaro, I. S. Meza-Sarmiento and R. Oliveira, \textsl{Topological  classification of simple Morse Bott functions on surfaces}, Contemp. Math. 675 (2016), 165--179.%
\bibitem{masumotosaeki} Y. Masumoto and O. Saeki, \textsl{A smooth function on a manifold with given Reeb graph}, Kyushu J. Math. 65 (2011), 75--84.

\bibitem{michalak} L. P. Michalak, \textsl{Realization of a graph as the Reeb graph of a Morse function on a manifold}. Topol. Methods in Nonlinear Anal. 52 (2) (2018), 749--762, arXiv:1805.06727.
%
\bibitem{milnor1} J. Milnor, \textsl{Morse Theory}, Annals of Mathematic Studies AM-51, Princeton University Press; 1st Edition (1963.5.1).
\bibitem{milnor2} J. Milnor, \textsl{Lectures on the h-cobordism theorem}, Math. Notes, Princeton Univ. Press, Princeton, N.J. 1965.
\bibitem{reeb} G. Reeb, \textsl{Sur les points singuliers d\'{}une forme de Pfaff compl\'{e}tement int\`{e}grable ou d\'{}une fonction num\'{e}rique}, Comptes Rendus
 Hebdomadaires des S\'{e}ances de I\'{}Acad\'{e}mie des Sciences 222 (1946), 847--849.
\bibitem{saeki1} O. Saeki, \textsl{Reeb spaces of smooth functions on manifolds}, International Mathematics Research Notices, maa301, Volume 2022, Issue 11, June 2022, 3740--3768, https://doi.org/10.1093/imrn/maa301, arXiv:2006.01689.
\bibitem{saeki2} O. Saeki, \textsl{Reeb spaces of smooth functions on manifolds II}, Res. Math. Sci. 11, article number 24 (2024), https://link.springer.com/article/10.1007/s40687-024-00436-z.
\bibitem{sharko} V. Sharko, \textsl{About Kronrod-Reeb graph of a function on a manifold}, Methods of Functional Analysis and
 Topology 12 (2006), 389--396.
\bibitem{willard} S. Willard, \textsl{General Topology}, Courier Corporation, 2012.
	
\end{thebibliography}
\end{document}